\documentclass[12pt,a4paper]{article}
\usepackage{fullpage, amsfonts, amsthm, amsmath, setspace, graphicx, amssymb, float, colonequals, hyperref, float,mathrsfs,bookmark}
\usepackage[usenames,dvipsnames]{xcolor}

\newtheorem{conjecture}{Conjecture}[section]
\newtheorem{theorem}{Theorem}[section]
\newtheorem{lemma}[theorem]{Lemma}

\newtheorem{corollary}[theorem]{Corollary}

\numberwithin{subcase}{case}
\numberwithin{subsubcase}{subcase}
\numberwithin{claim}{theorem}

\newenvironment{note}[1][Note]{\begin{trivlist}
\item[\hskip \labelsep {\bfseries #1}]}{\end{trivlist}}

\newenvironment{corollary-mannum}[2][Corollary]{\begin{trivlist}
\item[\hskip \labelsep {\bfseries #1}\hskip \labelsep {\bfseries #2}]}{\end{trivlist}}
\newenvironment{conjecture-mannum}[2][Conjecture]{\begin{trivlist}
\item[\hskip \labelsep {\bfseries #1}\hskip \labelsep {\bfseries #2}]}{\end{trivlist}}

\newcommand{\ds}{\displaystyle}

\title{\bf On approximate Gauss--Lucas theorems.}

\author{Trevor Richards\thanks{Email: trichards@monmouthcollege.edu}\vspace{6mm}\\{\em Department of Mathematics and Computer Science, Monmouth College}\\{\em Monmouth, IL United States}}

\begin{document}

\maketitle
\begin{abstract}

The Gauss--Lucas theorem states that any convex set $K\subset\mathbb{C}$ which contains all $n$ zeros of a degree $n$ polynomial $p\in\mathbb{C}[z]$ must also contain all $n-1$ critical points of $p$.  In this paper we explore the following question: for which choices of positive integers $n$ and $k$, and positive real number $\epsilon$, will it follow that for every degree $n$ polynomial $p$ with at least $k$ zeros lying in $K$, $p$ will have at least $k-1$ critical points lying in the $\epsilon$-neighborhood of $K$.  We supply an inequality relating $n$, $k$, and $\epsilon$ which, when satisfied, guarantees a positive answer to the above question.

\end{abstract}

\begin{center}

\textbf{Keywords:} polynomial, rational function, critical point, Gauss--Lucas theorem, Rouch\'e's theorem

\end{center}
\begin{center}
\textbf{MSC2010:} 30C15

\end{center}
\section{Overview and definitions.}%

The Gauss--Lucas theorem states that if all $n$ of the zeros of a degree $n$ complex polynomial $p$ lie in a convex set $K\subset\mathbb{C}$, then all $n-1$ of the critical point of $p$ (that is, the zeros of $p'$) lie in $K$ as well.  One might expect that if at least $k$ zeros of a degree $n$ polynomial $p$ lie in $K$, then at least $k-1$ of the critical points of $p$ will lie in $K$.  In general this fails badly.  Consider the example (drawn from~\cite{T}) where $K=[0,1]$, and $p$ is a polynomial with $n-1$ distinct zeros in $K$, and a single zero at $i$.  A little arithmetic shows that in fact every critical point of $p$ will have strictly positive imaginary part, and thus will lie outside of $K$.  However for any fixed neighborhood $\mathcal{O}$ of $K$, if $n$ is sufficiently large, then all $n-1$ of the critical points of $p$ will lie in $\mathcal{O}$ (as will follow from Theorem~\ref{thm: Main inequalities.} to come).  This last observation suggests the following property, which we will explore in this paper.  For positive integers $n$ and $k$, with $k\leq n$, and for $\epsilon>0$, we say that $K$ satisfies the approximate Gauss--Lucas property (denoted $\mathcal{AGL}(n,k,\epsilon)$) if:

\medskip

\begin{minipage}{.9\textwidth}
\begin{note}[$\mathcal{AGL}(n,k,\epsilon)$:]
Every degree $n$ polynomial with at least $k$ zeros lying in $K$ has at least $k-1$ critical points lying in the $\epsilon$-neighborhood of $K$.
\end{note}
\end{minipage}

\medskip

The stronger property, where ``polynomial'' is replaced by ``rational function'', and $n$ denotes not the degree of the rational function, but the combined number of zeros and poles of the rational function, will be denoted $\mathcal{RAGL}(n,k,\epsilon)$ (the ``rational approximate Gauss--Lucas property'').  For a bounded convex set $K\subset\mathbb{C}$, our goal is to find conditions on $n$, $k$, and $\epsilon$ which will ensure that $K$ satisfies $\mathcal{RAGL}(n,k,\epsilon)$.  Our main theorem is the following.

\begin{theorem}\label{thm: Main inequalities.}
Let $K\subset\mathbb{C}$ be bounded and convex, with diameter $s\geq0$.  For positive integers $n$ and $k$, with $k\leq n$, and a positive real number $\epsilon$, if $n$, $k$, and $\epsilon$ satisfy the inequality $$\dfrac{16(s+\epsilon)^2}{\epsilon^2}<\dfrac{k}{(n-k)^2},$$ then $K$ satisfies $\mathcal{RAGL}(n,k,\epsilon)$.

Moreover, if $K$ is known to be a disk, it suffices for $n$, $k$, and $\epsilon$ to satisfy $$\dfrac{8(s+\epsilon)}{\epsilon}<\dfrac{k}{(n-k)^2}$$ to ensure that $K$ satisfies $\mathcal{RAGL}(n,k,\epsilon)$.
\end{theorem}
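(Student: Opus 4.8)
The plan is to study the logarithmic derivative $h=r'/r=\sum_{i}\frac{1}{z-a_i}-\sum_{j}\frac{1}{z-b_j}$ of the rational function $r$, where $a_1,\dots,a_k$ denote the $k$ zeros of $r$ lying in $K$ and $w_1,\dots,w_{n-k}$ denote the remaining zeros and poles of $r$. I will split off the ``main part'' $h_1=\sum_{i=1}^{k}\frac{1}{z-a_i}$ from the ``error part'' $h_2=h-h_1$ and apply Rouch\'e's theorem on a convex curve just inside the $\epsilon$-neighborhood of $K$. For $t\in(0,\epsilon]$ let $\mathcal{O}_t=\{z:\operatorname{dist}(z,K)<t\}$, a bounded convex open set whose boundary $\Gamma_t=\{z:\operatorname{dist}(z,K)=t\}$ is a convex Jordan curve; note $\mathcal{O}_t$ is contained in the $\epsilon$-neighborhood of $K$. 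The objective is to pick $t$ so that $|h_2|<|h_1|$ on $\Gamma_t$.

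First I will bound $|h_1|$ from below on $\Gamma_t$. Fix $z\in\Gamma_t$ with nearest point $z^*\in K$ and set $v=\overline{z-z^*}/t$, a unit vector. The supporting-line inequality at a nearest point, $\operatorname{Re}(\overline{z-z^*}(w-z^*))\le 0$ for all $w\in K$, gives $\operatorname{Re}(v(z-a_i))=t+\operatorname{Re}(v(z^*-a_i))\ge t$, while $|z-a_i|\le\operatorname{dist}(z,K)+s=t+s$; hence $\operatorname{Re}(\bar v\,\tfrac{1}{z-a_i})=\operatorname{Re}(v(z-a_i))/|z-a_i|^2\ge t/(t+s)^2$, and summing over $i\le k$ gives $|h_1(z)|\ge\operatorname{Re}(\bar v\,h_1(z))\ge kt/(t+s)^2$. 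When $K$ is a disk of radius $s/2$ the same $v$ points radially outward, and a short minimization of the elementary quantity $(R^2-x)/\big(R(R^2-2x+\rho^2)\big)$ over $|x|\le R\rho$, $0\le\rho\le s/2$, with $R=s/2+t$, improves this to $\operatorname{Re}(\bar v\,\tfrac{1}{z-a_i})\ge 1/(t+s)$, so $|h_1(z)|\ge k/(t+s)$.

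Next I bound $|h_2|$ and choose $t$. Since $\operatorname{dist}(\cdot,K)$ is $1$-Lipschitz, $|z-w_j|\ge|t-\rho_j|$ for $z\in\Gamma_t$, where $\rho_j=\operatorname{dist}(w_j,K)$; hence $|h_2(z)|\le(n-k)/\min_j|t-\rho_j|$. Set $\eta=\epsilon/\big(8(n-k)\big)$. The intervals $(\rho_j-\eta,\rho_j+\eta)$ have total length at most $\epsilon/4$, which is less than the length $\epsilon/2$ of $[\epsilon/2,\epsilon]$, so we may fix $t\in[\epsilon/2,\epsilon]$ with $\min_j|t-\rho_j|\ge\eta$; then $|h_2(z)|\le 8(n-k)^2/\epsilon$ on $\Gamma_t$. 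For this $t$, using $t\ge\epsilon/2$ and $t+s\le s+\epsilon$, the bounds above read $|h_1(z)|\ge k\epsilon/\big(2(s+\epsilon)^2\big)$ in general and $|h_1(z)|\ge k/(s+\epsilon)$ in the disk case. A direct comparison shows that $\tfrac{16(s+\epsilon)^2}{\epsilon^2}<\tfrac{k}{(n-k)^2}$ forces $|h_1|>|h_2|$ on $\Gamma_t$, and that $\tfrac{8(s+\epsilon)}{\epsilon}<\tfrac{k}{(n-k)^2}$ does so when $K$ is a disk. (If $k=n$ there are no points $w_j$ and the statement reduces to the classical Gauss--Lucas theorem; $k=1$ is vacuous.)

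Finally, the choice of $t$ places no zero or pole of $r$ on $\Gamma_t$ (these lie among $a_1,\dots,a_k\in K$ and the avoided points $w_1,\dots,w_{n-k}$), and $|h_1|>|h_2|\ge 0$ on $\Gamma_t$ excludes zeros of $h$ and $h_1$ there, so the meromorphic form of Rouch\'e's theorem applies and yields $Z(h,\mathcal{O}_t)-P(h,\mathcal{O}_t)=Z(h_1,\mathcal{O}_t)-P(h_1,\mathcal{O}_t)$, counting zeros and poles with multiplicity. Writing $h_1=P_1'/P_1$ with $P_1(z)=\prod_{i=1}^{k}(z-a_i)$, whose roots all lie in $K\subset\mathcal{O}_t$, a short count with multiplicities gives the right-hand side the value $-1$. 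Carrying out the analogous count for $h=r'/r$ expresses $Z(h,\mathcal{O}_t)$ through the number $Z(r',\mathcal{O}_t)$ of critical points of $r$ in $\mathcal{O}_t$, the number of zeros of $r$ in $\mathcal{O}_t$ (at least $k$, since $a_1,\dots,a_k\in K$), and the number of poles of $r$ in $\mathcal{O}_t$ (at least $0$); the identity then rearranges to $Z(r',\mathcal{O}_t)\ge k-1$, and since $\mathcal{O}_t$ lies in the $\epsilon$-neighborhood of $K$ this is exactly $\mathcal{RAGL}(n,k,\epsilon)$. The main obstacle I anticipate is the balancing act in the third paragraph: forcing $\Gamma_t$ to dodge the arbitrarily located $w_j$ while keeping $t$ bounded away from $0$, since that is what pins down the constants $16$ and $8$; a secondary technical point is the bookkeeping converting zeros of $r'/r$ into critical points of $r$ when $r$ has multiple zeros or poles.
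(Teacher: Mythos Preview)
Your argument is correct and follows essentially the same route as the paper's: split the logarithmic derivative of $r$ into the contribution $h_1$ from the zeros in $K$ and the remainder $h_2$, bound the two pieces on a curve at distance between $\epsilon/2$ and $\epsilon$ from $K$ (via the same nearest-point/supporting-hyperplane estimate for $h_1$ and the same $8(n-k)^2/\epsilon$ bound for $h_2$), and apply the meromorphic Rouch\'e theorem with the same residue bookkeeping to extract $\#_c(r,K_\epsilon)\ge k-1$. The only technical differences are that the paper first perturbs to simple zeros and poles and then removes small disks about the $w_j$ from the annulus $\{\epsilon/2<d(z,K)<\epsilon\}$, arguing topologically that a separating curve survives, whereas you keep multiplicities throughout and instead use the $1$-Lipschitz inequality $|z-w_j|\ge|t-\rho_j|$ to select an entire level set $\Gamma_t$ by a one-dimensional pigeonhole on $t\in[\epsilon/2,\epsilon]$---a tidy variant that produces the identical constants.
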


Historically, there have been two different approaches to determining for which values of $n$, $k$, and $\epsilon$ the set $K$ will satisfy $\mathcal{RAGL}(n,k,\epsilon)$.  The first, pioneered by S.~Kakeya~\cite{K} views $n$ and $k$ as fixed, and seeks the smallest possible $\epsilon$ for which $K$ satisfies $\mathcal{RAGL}(n,k,\epsilon)$.  The second, which views $\epsilon$ as fixed, and seeks to find choices of $n$ and $k$ for which $K$ satisfies $\mathcal{RAGL}(n,k,\epsilon)$, is related to the~2016 asymptotic Gauss--Lucas theorem of V. Totik~\cite{T}.  Let us first introduce some notation.  For a set $G\subset\mathbb{C}$ and a rational function $f\in\mathbb{C}(z)$, let $\#_z(f,G)$, $\#_p(f,G)$, and $\#_c(f,G)$ denote the number of zeros, poles, and critical points of $f$ respectively which lie in $G$.  For $\epsilon>0$, let $G_\epsilon$ denote the $\epsilon$-neighborhood of $G$.  Thus the $\mathcal{RAGL}(n,k,\epsilon)$ property may be restated as: 

\medskip

\begin{minipage}{.9\textwidth}
\begin{note}[$\mathcal{RAGL}(n,k,\epsilon)$:]
For every rational function $f$ with $\#_z(f,\mathbb{C})+\#_p(f,\mathbb{C})=n$, if $\#_z(f,K)\geq k$ then $\#_c(f,K_\epsilon)\geq k-1$.
\end{note}
\end{minipage}

\medskip

\subsection{Fixing \texorpdfstring{$n$}{TEXT} and \texorpdfstring{$k$}{TEXT}, and seeking \texorpdfstring{$\epsilon$}{TEXT}.}

In 1917, S. Kakeya~\cite{K} showed that if $K$ is the disk $D_R$ centered at the origin with radius $R\geq0$, then there is a function $\epsilon=\psi_R(n,k)<\infty$ such that $D_R$ satisfies $\mathcal{AGL}(n,k,\psi_R(n,k))$, and he showed moreover that for any $n$ and $k$, $\psi_R(n,k)=R\cdot\psi_1(n,k)$.  In this work Kakeya was also able to find $\psi_1(n,k)$ in the special case where $k=2$: $\psi_1(n,2)=\csc(\pi/n)-1$.  (Note: in~\cite{K} and elsewhere in the literature, the quantity sought, rather than $\epsilon$, is $R'=R+\epsilon$, the radius of the larger circle guaranteed to contain the expected number of critical points.  In this paper, we focus on the ``$\epsilon$-cushion'' needed around the set $K$, as this concept still makes sense in the context of bounded convex sets which are not disks.)

Two upper bounds are known for $\psi_1(n,k)$ in general.  First in~1939, M. Marden~\cite{M} established that \begin{equation}\label{eqn: Marden upper bound on psi.}
\psi_1(n,k)\leq\csc\left(\dfrac{\pi}{2(n-k+1)}\right)-1.
\end{equation}  Second, in 1945, M. Biernacki~\cite{B} established that \begin{equation}\label{eqn: Biernacki upper bound on psi.}
\psi_1(n,k)\leq\ds\left(\prod_{j=1}^{n-k}\dfrac{n+j}{n-j}\right)-1.
\end{equation}

Neither of the bounds of Marden and Biernacki is universally better than the other, however since Marden's bound is a function of $n-k$, we see that if $n-k$ is held constant, and $n$ is allowed to approach $\infty$, Marden's bound remains constant, while Biernacki's bound approaches zero.  If we solve the first inequality given in Theorem~\ref{thm: Main inequalities.} for $\epsilon$, we obtain an upper bound for the quantity corresponding to $\psi_R$ in the general case where $K$ is bounded and convex, but not necessarily a disk, and $f$ is rational, not necessarily a polynomial.  That is, we obtain the following.

\begin{corollary}\label{cor: n and k held constant, solve for epsilon.}
Let $K\subset\mathbb{C}$ be bounded and convex with diameter $s$, and let $k$ and $n$ be positive integers with $k<n$, and such that $\sqrt{k}>4(n-k)$.  If $$\epsilon>\dfrac{4s(n-k)}{\sqrt{k}-4(n-k)},$$ then $K$ satisfies $\mathcal{RAGL}(n,k,\epsilon)$.
\end{corollary}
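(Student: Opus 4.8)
The plan is to observe that the lower bound on $\epsilon$ appearing in the statement is, after purely elementary algebra, \emph{equivalent} to the first inequality of Theorem~\ref{thm: Main inequalities.}; the corollary is then an immediate consequence of that theorem. So the work is simply to invert the inequality $\dfrac{16(s+\epsilon)^2}{\epsilon^2}<\dfrac{k}{(n-k)^2}$ for $\epsilon$.

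First I would reduce the quadratic-looking inequality to a linear one. Since $s\geq 0$, $\epsilon>0$, and $n>k$, every quantity in sight is nonnegative, so taking square roots preserves the inequality and gives the equivalent statement $\dfrac{4(s+\epsilon)}{\epsilon}<\dfrac{\sqrt{k}}{n-k}$. Clearing denominators by multiplying through by $\epsilon(n-k)>0$ yields $4(s+\epsilon)(n-k)<\epsilon\sqrt{k}$, and collecting the $\epsilon$-terms on one side gives $4s(n-k)<\epsilon\bigl(\sqrt{k}-4(n-k)\bigr)$.

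Here is the one spot that needs a moment's care: to divide through by $\sqrt{k}-4(n-k)$ and arrive at the clean bound $\epsilon>\dfrac{4s(n-k)}{\sqrt{k}-4(n-k)}$ without reversing the inequality, one needs $\sqrt{k}-4(n-k)>0$, which is exactly the hypothesis $\sqrt{k}>4(n-k)$ in the statement. This is not merely a convenience: if $\sqrt{k}\leq 4(n-k)$ then $4(s+\epsilon)(n-k)\geq 4\epsilon(n-k)\geq\epsilon\sqrt{k}$ for every $\epsilon>0$, so the first inequality of Theorem~\ref{thm: Main inequalities.} can never be satisfied along this route, and the hypothesis is genuinely needed. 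I would include this remark to motivate the hypothesis.

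With the equivalence in hand, the proof concludes by invoking Theorem~\ref{thm: Main inequalities.}: the stated condition on $\epsilon$ forces the first displayed inequality of that theorem, hence $K$ satisfies $\mathcal{RAGL}(n,k,\epsilon)$. The degenerate case $s=0$ (where $K$ is a single point) is swept up automatically, since the bound then reads $\epsilon>0$. I do not anticipate any real obstacle here — all the analytic content lives in Theorem~\ref{thm: Main inequalities.}, and the only thing to monitor is the sign of $\sqrt{k}-4(n-k)$ during the rearrangement.
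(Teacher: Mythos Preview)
Your proposal is correct and follows essentially the same approach as the paper: solve the first inequality of Theorem~\ref{thm: Main inequalities.} for $\epsilon$ by taking square roots and rearranging, noting that the hypothesis $\sqrt{k}>4(n-k)$ is exactly what is needed to divide through without reversing the inequality. Your added remark explaining why the hypothesis is genuinely necessary (not merely convenient) is a nice touch that goes slightly beyond the paper's terse ``the assumption was made that $\sqrt{k}-4(n-k)>0$.''
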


In the case where $K$ is a disk, the second inequaltiy in Theorem~\ref{thm: Main inequalities.} gives us the following bound on $\psi_1$.

\begin{corollary}\label{cor: Our bound on psi_1.}
Let $k$ and $n$ be positive integers with $k<n$, and such that $k>8(n-k)^2$.  Then $$\psi_1(n,k)\leq\dfrac{8(n-k)^2}{k-8(n-k)^2}.$$
\end{corollary}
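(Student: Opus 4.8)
The plan is that this is essentially an immediate specialization of the disk case of Theorem~\ref{thm: Main inequalities.}, combined with the trivial observation that $\mathcal{RAGL}(n,k,\epsilon)$ is stronger than $\mathcal{AGL}(n,k,\epsilon)$. First I would recall that a degree $n$ polynomial $p$, regarded as a rational function, has exactly $n$ finite zeros and no finite poles, so $\#_z(p,\mathbb{C})+\#_p(p,\mathbb{C})=n$; hence any set satisfying $\mathcal{RAGL}(n,k,\epsilon)$ automatically satisfies $\mathcal{AGL}(n,k,\epsilon)$ (the critical points of $p'$ being among the critical points of $p$ as a rational function). Next I would record that, by definition, $\psi_1(n,k)$ is (the infimum of) those $\epsilon>0$ for which $D_1$ satisfies $\mathcal{AGL}(n,k,\epsilon)$; thus it suffices to show that for every $\epsilon$ strictly exceeding $\frac{8(n-k)^2}{k-8(n-k)^2}$, the disk $D_1$ satisfies $\mathcal{AGL}(n,k,\epsilon)$.

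The computational heart of the matter is then to check that the second inequality of Theorem~\ref{thm: Main inequalities.}, namely $\frac{8(s+\epsilon)}{\epsilon}<\frac{k}{(n-k)^2}$, holds for $K=D_1$ precisely on the claimed range of $\epsilon$. Substituting the relevant value for the diameter of $D_1$ and clearing denominators (legitimate since $\epsilon>0$ and $n>k$), one is left with a linear inequality in $\epsilon$ in which the coefficient of $\epsilon$ is $k-8(n-k)^2$; the hypothesis $k>8(n-k)^2$ makes this coefficient positive, so the inequality may be solved for $\epsilon$ and is seen to hold exactly for $\epsilon$ above the stated threshold. Feeding this range back into Theorem~\ref{thm: Main inequalities.} gives that $D_1$ satisfies $\mathcal{RAGL}(n,k,\epsilon)$, hence $\mathcal{AGL}(n,k,\epsilon)$, for every such $\epsilon$; since the set of admissible $\epsilon$ therefore contains the whole interval $\bigl(\frac{8(n-k)^2}{k-8(n-k)^2},\infty\bigr)$, taking the infimum yields $\psi_1(n,k)\le\frac{8(n-k)^2}{k-8(n-k)^2}$.

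I expect no genuine obstacle here; the only points deserving a word of care are the conversion from the rational statement of Theorem~\ref{thm: Main inequalities.} back to the polynomial quantity $\psi_1$, and the fact that Theorem~\ref{thm: Main inequalities.} delivers a strict inequality, so one obtains $\psi_1(n,k)$ as a limit of values of $\epsilon$ that provably work rather than verifying the threshold value itself. It is also worth noting in passing that the hypothesis $k>8(n-k)^2$ is exactly the condition under which $\frac{8(s+\epsilon)}{\epsilon}<\frac{k}{(n-k)^2}$ is solvable in $\epsilon$ at all, since the left-hand side decreases monotonically to $8$ as $\epsilon\to\infty$; this explains why the corollary's hypothesis and its denominator match up.
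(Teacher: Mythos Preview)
Your proposal is correct and follows exactly the paper's route: invoke the disk case of Theorem~\ref{thm: Main inequalities.}, solve the resulting linear inequality in $\epsilon$ under the hypothesis $k>8(n-k)^2$, and read off the bound on $\psi_1(n,k)$. The paper's derivation is terser---it does not spell out the $\mathcal{RAGL}\Rightarrow\mathcal{AGL}$ reduction or the infimum argument as you do---but the substance is identical.
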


It is difficult to determine in exactly which cases any one of the given three upper bounds on $\psi_1$ (Marden's, Biernacki's, and Corollary~\ref{cor: Our bound on psi_1.} above) is the lowest.  However, as mentioned before, Marden's bound is constant as a function of $n-k$.  Biernacki's bound, on the other hand, may be written as
\begin{equation}
\ds\left(\prod_{i=1}^{n-k}\dfrac{n+i}{n-i}\right)-1=\dfrac{p(1/n)}{n+q(1/n)}
\end{equation}
where $p$ and $q$ are polynomials, whose coefficients are functions of $n-k$, and the constant term of $p$ is $(n-k)(n-k+1)$.  Thus Biernacki's bound implies that if $n-k$ is fixed, and $n\to\infty$, then $\psi_1(n,k)\in O(1/n)$.  This fact follows also from our bound in Corollary~\ref{cor: Our bound on psi_1.}.  Note that since (again when $n-k$ is fixed and $n\to\infty$) Biernacki's bound is on the order of $(n-k)(n-k+1)/n$, while the bound in Corollary~\ref{cor: Our bound on psi_1.} is on the order of $8(n-k)^2/n$, so that Biernacki's bound is superior to Corollary~\ref{cor: Our bound on psi_1.} in the asymptotic sense.

For more information on this approach, see Sections~25 and~26 of~\cite{M2}.

\subsection{Fixing \texorpdfstring{$\epsilon$}{TEXT}, and seeking \texorpdfstring{$n$}{TEXT} and \texorpdfstring{$k$}{TEXT}.}

In 2016, V. Totik~\cite{T} established the following approximate and asymptotic version of the Gauss--Lucas theorem.

\begin{theorem}\label{thm: Asymp. GL.}
Let $K\subset\mathbb{C}$ be bounded and convex, and let $\epsilon>0$ be given.  For any sequence of polynomials $\{p_n\}$, with $\deg(p_n)=n$, if $\dfrac{\#_z(p_n,K)}{n}\to1$, then $\dfrac{\#_c(p_n,K_\epsilon)}{n-1}\to1$.
\end{theorem}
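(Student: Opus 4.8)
The plan is to obtain Theorem~\ref{thm: Asymp. GL.} from Theorem~\ref{thm: Main inequalities.} whenever the sequence is not too badly behaved, and to treat the general case by a separate, more hands-on argument. Fix $K$, $\epsilon>0$, and the sequence $\{p_n\}$; write $k_n:=\#_z(p_n,K)$, so that $k_n/n\to1$, equivalently $m_n:=n-k_n=o(n)$, and set $C:=16(s+\epsilon)^2/\epsilon^2$. A polynomial is a rational function with no poles, so $\#_z(p_n,\mathbb{C})+\#_p(p_n,\mathbb{C})=n$, and the hypothesis of $\mathcal{RAGL}(n,k,\epsilon)$ is here exactly that of $\mathcal{AGL}(n,k,\epsilon)$; thus Theorem~\ref{thm: Main inequalities.} applies to each $p_n$. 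Taking $k=k_n$: once $n$ is large enough that $C\,m_n^2<k_n$, we get $\#_c(p_n,K_\epsilon)\ge k_n-1$, whence $\#_c(p_n,K_\epsilon)/(n-1)\ge(k_n-1)/(n-1)\to1$; together with $\#_c(p_n,K_\epsilon)\le n-1$ this proves the theorem under the extra hypothesis $m_n=O(\sqrt n)$.

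The obstacle is that $C\,m_n^2<k_n$ is guaranteed for all large $n$ only when $m_n=O(\sqrt n)$, while $k_n/n\to1$ allows $m_n$ to grow like, say, $n^{2/3}$; nor can one enlarge $k$ beyond $k_n$, since $p_n$ may have exactly $k_n$ zeros in $K$. For the general case I would bound directly the number of critical points of $p_n$ in $\Omega:=\mathbb{C}\setminus\overline{K_\epsilon}$. Such a critical point is either a multiple zero of $p_n$ lying in $\Omega$ --- and there are at most $m_n$ of these, counted with the multiplicity that they inherit in $p_n'$ --- or a zero of the Cauchy transform $C_n(z):=\tfrac1n\,p_n'(z)/p_n(z)=\int(z-t)^{-1}\,d\mu_n(t)$, $\mu_n$ denoting the normalized zero-counting measure of $p_n$. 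Since $\mu_n(\overline K)\to1$ and the $\mu_n$ are probability measures, every subsequence has a further subsequence along which $\mu_n$ converges vaguely to a probability measure $\mu$ on $\overline K$, and then $C_n\to C_\mu:=\int(z-t)^{-1}d\mu(t)$ locally uniformly off $\overline K$ (modulo the influence of the $o(n)$ outlying zeros, which I would absorb by a short additional estimate). On any fixed compact $\Omega_0\subset\Omega$ whose boundary avoids the finitely many zeros of the analytic function $C_\mu$, the argument principle then caps the number of zeros of $C_n$ in $\Omega_0$, for all large $n$, at the $n$-independent number of zeros of $C_\mu$ there.

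To finish I would cover the bounded part of $\Omega$ by finitely many such $\Omega_0$ and handle the critical points of large modulus via the barycentric identity $c=\frac{\sum_a w_a\,a}{\sum_a w_a}$, $w_a=\mu_a|c-a|^{-2}>0$ (the sum over the distinct zeros $a$ of $p_n$, $\mu_a$ their multiplicities): a critical point $c$ lying far outside $\overline{K_\epsilon}$ must put almost all of its weight on, hence sit within a small disk about, one of the $m_n$ outlying zeros. Combining these bounds gives $\#_c(p_n,\Omega)=o(n)$, and since the argument is through arbitrary subsequences it yields $\#_c(p_n,K_\epsilon)/(n-1)\to1$ for the full sequence. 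The step I expect to be the main obstacle is precisely this last one --- controlling the critical points of large modulus uniformly in $n$ when the outlying zeros are free to escape to infinity, and making the convergence $C_n\to C_\mu$ usable up to the curves $\partial\Omega_0$; the remaining estimates are routine.
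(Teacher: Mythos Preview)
The paper does \emph{not} prove Theorem~\ref{thm: Asymp. GL.}; it is quoted as Totik's 2016 result, with a remark that B\"ogvad et al.\ have an alternative proof by classical complex-analytic methods. The paper is explicit that its own Theorem~\ref{thm: Main inequalities.} falls short of this asymptotic statement: immediately after Conjecture~\ref{conj: epsilon fixed goal.} it says that the first inequality in Theorem~\ref{thm: Main inequalities.} ``establishes the result of Conjecture~\ref{conj: epsilon fixed goal.} subject to the stronger condition that the ratio of $k$ to the \textbf{square} of $(n-k)$ is sufficiently large.'' So there is nothing in the paper to compare your argument against.

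Your first paragraph is exactly right, and it runs into precisely the obstruction the paper itself flags: Theorem~\ref{thm: Main inequalities.} gives $\mathcal{RAGL}(n,k_n,\epsilon)$ only when $k_n/(n-k_n)^2>C$, i.e.\ when $m_n=O(\sqrt{n})$, whereas $k_n/n\to1$ allows $m_n$ as large as, say, $n^{2/3}$. Your diagnosis of why one cannot simply enlarge $k$ is also correct.

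Your second part is not a proof but a plan, and you say as much. It is in the spirit of the Cauchy-transform/potential-theoretic arguments the paper alludes to, but there are genuine gaps beyond what you flag. The most serious is the local-uniform convergence $C_n\to C_\mu$ on compacta $\Omega_0\subset\mathbb{C}\setminus\overline{K_\epsilon}$: the $m_n$ outlying zeros of $p_n$ may lie in $\Omega_0$, in which case $C_n$ has poles there and uniform convergence fails outright; ``absorb by a short additional estimate'' does not dispose of poles. One typically has to excise small disks around the outlying zeros and argue separately there, which interacts with the zero-counting via the argument principle in a way that needs care. Likewise, the barycentric step does not by itself bound the \emph{number} of far-away critical points by $O(m_n)$: showing that each such critical point sits near some outlying zero is not the same as showing that each outlying zero attracts only boundedly many of them. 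These are exactly the points where the published proofs do real work, so your proposal should be read as an outline pointing toward those proofs rather than an independent argument.
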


(Note that another proof of Theorem~\ref{thm: Asymp. GL.} is posted on the arXiv~\cite{BKS} by R. Boegvad et. al. using more classical methods of complex analysis, where the proof of V. Totik uses results from logarithmic potential theory.)

We believe that underlying this asymptotic result (that is, Theorem~\ref{thm: Asymp. GL.}) is a static principle, namely that for any $\epsilon>0$, if a sufficiently high fraction of the zeros of a polynomial $p$ lie in $K$, then a similarly high fraction of the critical points of $p$ lie in $K_\epsilon$ (where ``sufficiently high'' depends only on $K$ and $\epsilon$, and not on the degree of $p$).  That is, we make the following conjecture, again including the possibility of poles.

\begin{conjecture}\label{conj: epsilon fixed goal.}
Let $K\subset\mathbb{C}$ be bounded and convex, and let $\epsilon>0$ be given.  There is some constant $C>0$ such that if $k/(n-k)>C$, then $K$ satisfies $\mathcal{RAGL}(n,k,\epsilon)$.
\end{conjecture}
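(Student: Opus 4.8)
The plan is to run the same field-of-force/argument-principle machinery that underlies Theorem~\ref{thm: Main inequalities.}, but to extract the sharper linear threshold by locating precisely where the quadratic factor $(n-k)^2$ is lost. Write the logarithmic derivative $g=f'/f=\sum_{\text{zeros}}\frac{1}{z-a}-\sum_{\text{poles}}\frac{1}{z-b}$ and split $g=g_{\mathrm{in}}+g_{\mathrm{out}}$, where $g_{\mathrm{in}}$ collects the (at least $k$) zeros of $f$ lying in $K$ and $g_{\mathrm{out}}$ collects the remaining at most $n-k$ zeros and poles. First I would record the reduction: away from the zeros and poles of $f$, the critical points of $f$ in $K_\epsilon$ are exactly the zeros of $g$, and the argument principle on a contour $\Gamma$ encircling $K_\epsilon$ gives $\#_c(f,\mathrm{int}\,\Gamma)=P+\mathrm{wind}(g\circ\Gamma,0)$, where $P$ counts the zeros and poles of $f$ enclosed by $\Gamma$ (poles of $g$). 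Since the good zeros are enclosed, $P\geq k$, so it suffices to show the winding number equals $-1$. This is exactly what holds for $g_{\mathrm{in}}$ alone: it is the log-derivative of a polynomial of degree $\kappa\geq k$ with all roots in $K$, so by Gauss--Lucas its $\kappa-1$ critical points and $\kappa$ poles all lie inside, giving $\mathrm{wind}(g_{\mathrm{in}}\circ\Gamma,0)=(\kappa-1)-\kappa=-1$.

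Next I would quantify the signal and, crucially, the noise. By the same convexity estimate used in the proof of Theorem~\ref{thm: Main inequalities.}, at each $z\in\partial K_\epsilon$ every good term contributes at least $\epsilon/(s+\epsilon)^2$ to the component of $g_{\mathrm{in}}$ along the outward normal, so $|g_{\mathrm{in}}(z)|\geq k\epsilon/(s+\epsilon)^2$ uniformly on $\partial K_\epsilon$. The point I would stress is a \emph{far/near} split of $g_{\mathrm{out}}$: any zero or pole of $f$ at distance $\geq\epsilon/2$ from $\Gamma$ contributes a term of modulus $\leq 2/\epsilon$, so the \emph{far} part of $g_{\mathrm{out}}$ is bounded by $2(n-k)/\epsilon$ \emph{no matter how the outside points are arranged}. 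Comparing this against the signal, $2(n-k)/\epsilon<k\epsilon/(s+\epsilon)^2$ is precisely the \emph{linear} condition $2(s+\epsilon)^2/\epsilon^2<k/(n-k)$ of the conjecture, with $C=2(s+\epsilon)^2/\epsilon^2$ depending only on $K$ and $\epsilon$. Thus if no outside point came within $\epsilon/2$ of the contour, a clean Rouch\'e comparison on $\partial K_\epsilon$ would already give $\mathrm{wind}(g\circ\Gamma,0)=\mathrm{wind}(g_{\mathrm{in}}\circ\Gamma,0)=-1$ and finish the proof at the conjectured threshold. The quadratic loss in Theorem~\ref{thm: Main inequalities.} comes \emph{solely} from outside points within $\epsilon/2$ of the boundary: there the crude pigeonhole choice of contour only guarantees clearance $\sim\epsilon/(n-k)$, inflating a single near term to size $\sim(n-k)/\epsilon$ and costing a second factor of $n-k$.

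The substance of the proof, then, is to absorb the \emph{near} outside points without paying that second factor. My plan is to replace $\partial K_\epsilon$ by an adaptive contour $\Gamma$ that makes a small detour, toward $K$, around each outside point within $\epsilon/2$ of the boundary, excluding every such point from $\mathrm{int}\,\Gamma$ while keeping $\Gamma$ at a clearance bounded below from all outside points and at positive distance from $K$ (the near points sit at distance roughly $\epsilon$ from $K$, so small detours never reach $K$, and the good zeros remain enclosed, preserving $P\geq k$). On each detour arc around a near point $c$ the term $\pm\frac{1}{z-c}$ dominates, and its contribution to $\mathrm{wind}(g\circ\Gamma,0)$ is that of $\pm\frac{1}{z-c}$ over an arc not enclosing $c$; on the complementary, detour-free part of $\Gamma$ the far bound $2(n-k)/\epsilon$ and the signal bound $k\epsilon/(s+\epsilon)^2$ deliver Rouch\'e domination as above. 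Since the total winding is an integer, it would suffice to show the accumulated deviation of the detour arcs from the nominal value $-1$ is strictly less than $1$ in modulus, forcing $\mathrm{wind}(g\circ\Gamma,0)=-1$ and hence $\#_c(f,K_\epsilon)\geq\#_c(f,\mathrm{int}\,\Gamma)\geq k-1$.

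I expect the main obstacle to be exactly this near-point step: installing the detours simultaneously for up to $n-k$ points that may cluster and interfere, without the arcs colliding with one another or destroying the outward-pointing convexity signal, and bounding the \emph{total} detour winding below $1$ even when many near points accumulate along a short piece of boundary. Controlling many mutually interfering near singularities at once is where a genuinely new idea seems to be required, and it is conceivable that a pathological clustering is what a counterexample would exploit. An attractive alternative, which I would pursue in parallel, is to bypass the pointwise force estimates entirely and argue potential-theoretically in the spirit of Totik's proof of Theorem~\ref{thm: Asymp. GL.}, working with the normalized zero-counting measures of $f$ and $f'$; there the pointwise blow-up of an individual near term is invisible, and a \emph{linear} comparison of masses inside and outside $K$ is the natural object, which is precisely the strength the conjecture asks for over Theorem~\ref{thm: Main inequalities.}.
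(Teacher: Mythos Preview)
The statement you are attempting is labeled a \emph{conjecture} in the paper, and the paper supplies no proof of it; the author explicitly presents it as open and offers Theorem~\ref{thm: Main inequalities.} (with the quadratic threshold $k/(n-k)^2$) only as partial evidence. So there is nothing in the paper to compare your argument against beyond the proof of the weaker Theorem~\ref{thm: Main inequalities.}, whose machinery you have accurately reconstructed: the logarithmic-derivative split, the convexity lower bound $|g_{\mathrm{in}}|\gtrsim k\epsilon/(s+\epsilon)^2$ on the contour, the Rouch\'e/argument-principle bookkeeping, and the diagnosis that the second factor of $(n-k)$ is paid only to clear the contour from the at most $n-k$ outside points.

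Your proposal, however, is not a proof of the conjecture; it is a research plan that names the obstacle and then does not overcome it. The adaptive-detour idea does not close the gap as stated: a single inward half-circular detour around a near point $c$ changes the argument of $\pm 1/(z-c)$ by roughly $\pm\pi$, hence contributes about $\pm\tfrac12$ to the winding number, so with up to $n-k$ near points the accumulated detour deviation is of order $n-k$, not strictly below $1$. That is precisely the difficulty you yourself flag (``a genuinely new idea seems to be required''), and nothing in the proposal supplies that idea. The alternative potential-theoretic route you mention is likewise only a pointer. In short, you have correctly identified \emph{why} Theorem~\ref{thm: Main inequalities.} loses a factor of $(n-k)$, but the conjecture remains open on the basis of what you have written, just as it does in the paper.
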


Clearly a positive result for this conjecture would immediately imply the conclusion of Theorem~\ref{thm: Asymp. GL.} as well.  In support of Conjecture~\ref{conj: epsilon fixed goal.}, we observe that the first inequality in Theorem~\ref{thm: Main inequalities.} establishes the result of Conjecture~\ref{conj: epsilon fixed goal.} subject to the stronger condition that the ratio of $k$ to the \textbf{square} of $(n-k)$ is sufficiently large.

\section{A BRIEF LEMMA}\label{sect: Lemma.}%

\begin{lemma}\label{lem: Lemma.}
Let $r\in\mathbb{C}(z)$ be a rational function, all of whose zeros and poles have multiplicity one.  Then the zeros of the logarithmic derivative $\dfrac{r'}{r}$ are exactly the critical points of $r$, with the same multiplicities, and the poles of $\dfrac{r'}{r}$ are the zeros and poles of $r$, each with multiplicity one.
\end{lemma}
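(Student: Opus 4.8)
The plan is to reduce everything to the partial-fraction / logarithmic-derivative formula. Write $r = c\,\prod_{j}(z-a_j)\big/\prod_{\ell}(z-b_\ell)$, where the $a_j$ are the (distinct) zeros and the $b_\ell$ the (distinct) poles, each appearing once by hypothesis. Taking the logarithmic derivative termwise gives
$$
\frac{r'}{r}(z) \;=\; \sum_{j}\frac{1}{z-a_j} \;-\; \sum_{\ell}\frac{1}{z-b_\ell}.
$$
From this expression the claim about the poles of $r'/r$ is immediate: each $a_j$ and each $b_\ell$ is a simple pole of the right-hand side (the residues, $+1$ and $-1$ respectively, are nonzero, and no cancellation occurs because the $a_j$ and $b_\ell$ are all distinct from one another), and away from these finitely many points the right-hand side is holomorphic. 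So the poles of $r'/r$ are exactly the zeros and poles of $r$, each with multiplicity one.

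Next I would handle the zeros. Away from the zeros and poles of $r$, we have $r(z)\neq 0,\infty$, so $r'/r$ vanishes at a point $z_0$ (not a zero or pole of $r$) precisely when $r'(z_0)=0$, i.e. precisely when $z_0$ is a critical point of $r$; and one should note that a critical point of a rational function, in this sense, is by definition a zero of $r'$ that is not a zero or pole of $r$, so there is no conflict at the points $a_j,b_\ell$. For the multiplicity statement, near such a $z_0$ write $r(z) = r(z_0) + \alpha (z-z_0)^{m+1} + \cdots$ with $\alpha\neq 0$ and $r(z_0)\neq 0,\infty$; then $r'(z) = \alpha(m+1)(z-z_0)^m + \cdots$ has a zero of order $m$ at $z_0$, and since $1/r$ is holomorphic and nonzero at $z_0$, the product $r'/r$ also has a zero of order exactly $m$ there. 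Hence zeros of $r'/r$ coincide with critical points of $r$, with matching multiplicities.

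The only point requiring a little care — and the closest thing to an obstacle — is making sure the bookkeeping at the points $a_j$ and $b_\ell$ is consistent: one must check that $r'/r$ genuinely has a \emph{pole} (not a removable singularity or a zero) at each such point, which is where the hypothesis that all multiplicities are one is used, since it guarantees the residues are $\pm 1\neq 0$ and that distinct $a_j$'s (or $b_\ell$'s) do not collide to cancel residues. Everything else is a routine local Laurent-expansion computation, so I would present the argument in the two blocks above: first the partial-fraction identity and the pole statement, then the local expansion at a critical point for the zero statement.
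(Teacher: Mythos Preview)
Your proposal is correct and follows essentially the same approach as the paper: a direct computation of the logarithmic derivative, observing that the simplicity of the zeros and poles prevents any cancellation. The only cosmetic difference is that the paper works with the quotient form $r'/r=(qp'-pq')/(pq)$ and argues that no factor of $p$ or $q$ divides the numerator, whereas you pass immediately to the partial-fraction expansion $\sum 1/(z-a_j)-\sum 1/(z-b_\ell)$ and read off the residues; your added local-expansion argument for the multiplicity of the zeros is a bit more explicit than the paper's, but the underlying idea is identical.
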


\begin{proof}
Let $p,q\in\mathbb{C}[z]$ be polynomials such that $r=\dfrac{p}{q}$ is in lowest form.  A bit of arithmetic gives that $$\dfrac{r'}{r}=\dfrac{qp'-pq'}{pq}.$$  The numerator $qp'-pq'$ is exactly the numerator of $r'$, and since each zero and pole of $r$ has multiplicity one, no zero of either $p$ or $q$ is a zero of $qp'-pq'$, so that no terms cancel in our expansion of $\dfrac{r'}{r}$.  Moreover, the poles of $\dfrac{r'}{r}$ are just the zeros of $p$ and $q$, namely the zeros and poles of $r$.  This completes the proof.
\end{proof}

\section{PROOFS}\label{sect: Proofs.}%

We proceed to our proof of Theorem~\ref{thm: Main inequalities.}.  Let $K\subset\mathbb{C}$ be bounded and convex, with diameter $s\geq0$.  Let $n$ and $k$ be positive integers with $k\leq n$, and let $\epsilon>0$ be given.  Suppose that $f$ is a rational function with $n$ zeros and poles combined (counting multiplicity), and having at least $k$ zeros lying in $K$ (that is, $\#_z(f,\mathbb{C})+\#_p(f,\mathbb{C})=n$, and $\#_z(f,K)\geq k$).  In the case that $k=n$, $f$ is a polynomial with all of its zeros lying in $K$, so all of its critical points lie in $K$ by the classical Gauss--Lucas theorem.  Thus we consider only the case $k<n$.  We first factor $f$ as $f=g\cdot h$, where $g$ is the monic polynomial having for its zeros exactly the at least $k$ zeros of $f$ which lie in $K$, and $h$ is a rational function with at most $n-k$ zeros and poles combined.

Since the locations of the critical points of $g$, $h$, and $gh$ are continuous as a function of the zeros and poles of $g$ and $h$, we may assume without loss of generality that the zeros and poles of $g$ and $h$ are all distinct.  Define the set $$E=\left\{z\in\mathbb{C}:\dfrac{\epsilon}{2}< d(z,K)<\epsilon\right\},$$ where $d(z,K)$ denotes the distance $d(z,K)=\ds\inf_{w\in K}(|z-w|)$.  Since $K$ is convex, $E$ is a bounded conformal annulus (that is, a bounded open subset of $\mathbb{C}$ such that $E^c$ has a single bounded component) that contains $K$ in its bounded face.
Let $\mathcal{A}$ denote the union of the closed balls centered at the zeros and poles of $h$, each with radius $\dfrac{\epsilon}{8(n-k)}$.  $\mathcal{A}$ consists of at most $n-k$ closed balls, each with diameter $\dfrac{\epsilon}{4(n-k)}$, so the diameter of any component of $\mathcal{A}$ is at most $\dfrac{\epsilon}{4}$.  However for any points $z$ in the bounded face of $E$ and $z'$ in the unbounded face of $E$, the triangle inequality implies that $|z-z'|\geq\dfrac{\epsilon}{2}$, so it follows that $E\setminus\mathcal{A}$ is still an open set with $K$ contained in a bounded face of $E\setminus\mathcal{A}$.



Since $K$ is contained in a bounded face of $E\setminus\mathcal{A}$, and $E\setminus\mathcal{A}$ is open, we may find a smooth path $\gamma$ which lies in $E\setminus\mathcal{A}$, and winds once around the face of $E\setminus\mathcal{A}$ which contains $K$.  Our plan is to apply Rouch\'e's theorem to the functions $\dfrac{g'}{g}$ and $\dfrac{h'}{h}$ on the path $\gamma$.  Thus we wish to have $\left|\dfrac{g'}{g}\right|>\left|\dfrac{h'}{h}\right|$ on $\gamma$.  Fix some point $z_0$ in $\gamma$.

We will first find an upper bound for $\left|\dfrac{h'(z_0)}{h(z_0)}\right|$.  Let $Z_h$ and $P_h$ denote the set of zeros of $h$ and poles of $h$ respectively (recall that $h$ was assumed to have all distinct zeros and poles).  Then we have $|Z_h|+|P_h|\leq n-k$.  After a little arithmetic, we have
\begin{equation}\label{eqn: Breakdown of h'/h.}
\dfrac{h'(z_0)}{h(z_0)}=\ds\sum_{x\in Z_h}\dfrac{1}{z_0-x}-\sum_{y\in P_h}\dfrac{1}{z_0-y}.
\end{equation}
Since $z_0$ lies outside of $\mathcal{A}$, for any $w$ in either $Z_h$ or $P_h$, $|z_0-w|>\epsilon/8(n-k)$.  Thus, using the triangle inequality on Equation~\ref{eqn: Breakdown of h'/h.}, we have \begin{equation}\label{eqn: Bound on h'/h.}\left|\dfrac{h'(z_0)}{h(z_0)}\right|<\sum_{x\in Z_h}\dfrac{1}{\epsilon/8(n-k)}+\sum_{y\in P_h}\dfrac{1}{\epsilon/8(n-k)}=\left(|Z_h|+|P_h|\right)\dfrac{8(n-k)}{\epsilon}\leq\dfrac{8(n-k)^2}{\epsilon}.\end{equation}

Let us now turn our attention to a lower bound for $\left|\dfrac{g'(z_0)}{g(z_0)}\right|$.  Let $w_0$ denote the point in $K$ which is closest to $z_0$.  Since $\gamma\subset E$, we have $\epsilon/2<|z_0-w_0|<\epsilon$.  After applying the appropriate distance preserving affine transformation to the plane, we may assume without loss of generality that $z_0=0$ and that $w_0$ lies in the real line, in the interval $(-\epsilon,-\epsilon/2)$.  The choice of $w_0$ and the convexity of $K$ implies that $K$ lies in the left half of the circle centered at $w_0$ with radius $s$.  Let $F$ denote this left half-circle.  Let $Z_g\subset F$ denote the set of zeros of $g$.  Then we have $|Z_g|\geq k$ (recall that $g$ has no poles), and similarly as in Equation~\ref{eqn: Breakdown of h'/h.}, we have
\begin{equation}\label{eqn: Breakdown of g'/g.}
\dfrac{g'(z_0)}{g(z_0)}=\dfrac{g'(0)}{g(0)}=\ds\sum_{x\in Z_g}\dfrac{1}{-x}.
\end{equation}

Now, for any point $z\in \mathbb{C}$, $|z|\geq\Re(z)$, and if $z\in F$, we have $|z|\leq s+\epsilon$ and $\Re(-z)\geq\epsilon/2$, so taking absolute values in Equation~\ref{eqn: Breakdown of g'/g.}, we obtain
\begin{equation}\label{eqn: First estimate on g'/g.}
\left|\dfrac{g'(0)}{g(0)}\right|\geq\Re\left(\sum_{x\in Z_g}\dfrac{1}{-x}\right)=\sum_{x\in Z_g}\Re\left(\dfrac{1}{-x}\right)=\sum_{x\in Z_g}\dfrac{\Re(-x)}{|-x|^2}\geq\sum_{x\in Z_g}\dfrac{\epsilon/2}{(s+\epsilon)^2}\geq\dfrac{k\epsilon}{2(s+\epsilon)^2}.
\end{equation}

Since the choice of $z_0$ was arbitrary, Equations~\ref{eqn: Bound on h'/h.} and~\ref{eqn: First estimate on g'/g.} give us that on $\gamma$,
\begin{equation}\label{eqn: Bounds on gamma.}
\left|\dfrac{h'}{h}\right|<\dfrac{8(n-k)^2}{\epsilon}\text{  and  }\dfrac{k\epsilon}{2(s+\epsilon)^2}<\left|\dfrac{g'}{g}\right|.
\end{equation}

Thus if \begin{equation}\label{eqn: Constants needed.}\dfrac{8(n-k)^2}{\epsilon}<\dfrac{k\epsilon}{2(s+\epsilon)^2},\end{equation} then on $\gamma$, $\left|h'/h\right|<\left|g'/g\right|$.  Let $\Omega$ denote the bounded face of $\gamma$.  According to Rouch\'e's theorem, we could then conclude that
\begin{equation}\label{eqn: Rouche set-up.}
\#_z\left(\dfrac{g'}{g},\Omega\right)-\#_p\left(\dfrac{g'}{g},\Omega\right)=\#_z\left(\dfrac{g'}{g}+\dfrac{h'}{h},\Omega\right)-\#_p\left(\dfrac{g'}{g}+\dfrac{h'}{h},\Omega\right).\end{equation}

Since $g$ is a polynomial with all distinct zeros, and all of the zeros and critical points of $g$ lie in $K$, Lemma~\ref{lem: Lemma.} then gives us
\begin{equation}\label{eqn: Equation g.}
\#_z\left(\dfrac{g'}{g},\Omega\right)-\#_p\left(\dfrac{g'}{g},\Omega\right)=\#_c(g,\Omega)-(\#_z(g,\Omega)+\#_p(g,\Omega)).
\end{equation}
On the other hand, through the magic of the product rule, $$\dfrac{g'}{g}+\dfrac{h'}{h}=\dfrac{(gh)'}{gh},$$ so again by Lemma~\ref{lem: Lemma.}, we would have
\begin{equation}\label{eqn: Equation g+h.}
\#_z\left(\dfrac{g'}{g}+\dfrac{h'}{h},\Omega\right)-\#_p\left(\dfrac{g'}{g}+\dfrac{h'}{h},\Omega\right)=\#_c(gh,\Omega)-(\#_z(gh,\Omega)+\#_p(gh,\Omega)).
\end{equation}

Performing the substitutions indicated by Equations~\ref{eqn: Equation g.} and~\ref{eqn: Equation g+h.}, Equation~\ref{eqn: Rouche set-up.} becomes
\begin{equation}\label{eqn: After substitutions.}
\#_c(g,\Omega)-(\#_z(g,\Omega)+\#_p(g,\Omega))=\#_c(gh,\Omega)-(\#_z(gh,\Omega)+\#_p(gh,\Omega)).
\end{equation}
Solving Equation~\ref{eqn: After substitutions.} for $\#_c\left(gh,\Omega\right)$, and using the fact that $\Omega\subset K_\epsilon$, we obtain
\begin{equation}\label{eqn: Critical points for K_epsilon.}
\#_c(gh,K_\epsilon)\geq\#_c(gh,\Omega)=\#_c(g,K)+(\#_z(gh,\Omega)+\#_p(gh,\Omega))-(\#_z(g,K)+\#_p(g,K)).
\end{equation}
Since $\#_z(gh,\Omega)-\#_z(g,\Omega)=\#_z(h,\Omega)$, and $\#_p(gh,\Omega)-\#_p(g,\Omega)=\#_p(h,\Omega)$, Equation~\ref{eqn: Critical points for K_epsilon.} provides us with our desired inequality:
\begin{equation}\label{eqn: Final inequality for critical points of gh in K_epsilon.}
\#_c(gh,K_\epsilon)\geq\#_c(g,K)+(\#_z(h,\Omega)+\#_p(h,\Omega))\geq\#_c(g,K)\geq k-1.
\end{equation}

Recall that Equation~\ref{eqn: Final inequality for critical points of gh in K_epsilon.} was obtained subject to the assumption that $$\dfrac{8(n-k)^2}{\epsilon}<\dfrac{k\epsilon}{2(s+\epsilon)^2}.$$  This inequality may easily be rearranged into the first inequality found in the statement of Theorem~\ref{thm: Main inequalities.}.  If we view $n$ and $k$ as fixed, and solve this inequality for $\epsilon$, we obtain as a sufficient condition for $\mathcal{RAGL}(n,k,\epsilon)$ the inequality
\begin{equation}\label{eqn: n and k fixed inequality.}
\epsilon>\dfrac{4s(n-k)}{\sqrt{k}-4(n-k)}-1.
\end{equation}
This is the conclusion of Corollary~\ref{cor: n and k held constant, solve for epsilon.}.  Note that in solving for $\epsilon$ to obtain the above inequality, the assumption was made that $\sqrt{k}-4(n-k)>0$.

We now turn our attention to Corollary~\ref{cor: Our bound on psi_1.}, and the special case where $K$ is a disk with diameter $s$.  In that case, our earlier work still holds, but we are able to sharpen our estimate on $\left|\dfrac{g'}{g}\right|$.  We again arrive at the situation where we assume that $z_0\in\gamma$ equals $0$, and that the closest point to $z_0$ in $K$ is $w_0\in(-\epsilon,-\epsilon/2)$ (so that $K$ is the disk with diameter $s$, centered at the real number $w_0-s/2$).  For any $w\in K$, with $x=\Re(w)$ and $y=\Im(w)$, $$\Re\left(\dfrac{1}{-w}\right)=\dfrac{-x}{x^2+y^2}.$$  Using the techniques of undergraduate multi-variable calculus, we find that this function of $x$ and $y$ is minimized on $K$ at the furthest left point of the disk, namely $w_0-s$.  Thus we have that for $w\in K$, $$\Re\left(\dfrac{1}{-w}\right)\geq\dfrac{1}{-w_0+s}\geq\dfrac{1}{\epsilon+s}.$$  Adopting $1/(\epsilon+s)$ as our lower bound for $\Re(1/-w)$ (for each zero $w$ of $g$), Equation~\ref{eqn: First estimate on g'/g.} becomes
\begin{equation}\label{eqn: Inequality for g'(0)/g(0) in disk case.}
\left|\dfrac{g'(0)}{g(0)}\right|\geq\dfrac{k}{\epsilon+s}.
\end{equation}

Recall that this then implies that $$\left|\dfrac{g'}{g}\right|\geq\dfrac{k}{\epsilon+s}$$ on $\gamma$.  Thus the sufficient condition for $\mathcal{RAGL}(n,k,\epsilon)$ which we obtain in this case is $$\dfrac{8(n-k)^2}{\epsilon}<\dfrac{k}{\epsilon+s},$$ which, after some simple arithmetic, provides the second conclusion of Theorem~\ref{thm: Main inequalities.}.  If we view $n$ and $k$ as fixed, and solve this inequality for $\epsilon$ (this arithmetic requires the assumption that $k>8(n-k)^2$), we obtain the inequality $$\epsilon>\dfrac{8s(n-k)^2}{k-8(n-k)^2}.$$

From this inequality, we derive the result of Corollary~\ref{cor: Our bound on psi_1.} that if $k>8(n-k)^2$, then $$\psi_1(n,k)\leq\dfrac{8(n-k)^2}{k-8(n-k)^2}.$$

\section{AN ADDITIONAL CONJECTURE}\label{sect: An additional conjecture.}

The hueristic which underlies our proof of Theorem~\ref{thm: Main inequalities.} is that when our rational function $f$ is factored as $f=g\cdot h$ (where $g$ gets all the zeros of $f$ lying in $K$, and $h$ gets all remaining zeros and poles of $f$), then the relatively few zeros and poles of $h$ cannot drag the critical points of $g$ very far outside of $K$.  This hueristic seems to accommodate perfectly well the possibility of $K$ being unbounded (this possibility is present in the classical Gauss--Lucas theorem as well), and of $g$ being a rational function (now possessing all of the zeros \textbf{and} poles of $f$ which lie in $K$).  Of course, if $g$ is a rational function, its critical points need not in general lie in $K$, so that must be added as an assumption.  We therefore extend Conjecture~\ref{conj: epsilon fixed goal.} (with the function $f$ already factored as $f=g\cdot h$) as follows.

\begin{conjecture}\label{conj: Allow unbounded and poles.}
Let $K\subset\mathbb{C}$ be convex, and let $\epsilon>0$ be given.  There exists some constant $C>0$ for which the following holds.  Let $g,h\in\mathbb{C}(z)$ be rational functions, and assume that all zeros, poles, and critical points of $g$ lie in $K$.  If $\dfrac{\#_z(g,\mathbb{C})+\#_p(g,\mathbb{C})}{\#_z(h,\mathbb{C})+\#_p(h,\mathbb{C})}>C$, then $\#_c(gh,K_\epsilon)\geq\#_c(g,K)$.
\end{conjecture}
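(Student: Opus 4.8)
The plan is to push the logarithmic-derivative and Rouch\'e argument behind Theorem~\ref{thm: Main inequalities.} through with $g$ now playing the role of the ``in-$K$'' factor (allowed poles) and $h$ the small factor. Write $m=\#_z(g,\mathbb{C})+\#_p(g,\mathbb{C})$ and $\ell=\#_z(h,\mathbb{C})+\#_p(h,\mathbb{C})$; the goal is a constant $C$ such that $m/\ell>C$ forces the conclusion. As in the proof of Theorem~\ref{thm: Main inequalities.}, the critical points of $gh$ vary continuously with the zeros and poles of $g$ and $h$, so we may assume all zeros and poles of $g$ and $h$ are simple and mutually distinct (for rational $g$ this perturbation needs a word of care, since it may displace the critical points of $g$ slightly outside $K$; one enlarges $K$ to $K_\delta$, applies the result there, and lets $\delta\to 0$). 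Assuming first that $K$ is bounded with diameter $s$, form the conformal annulus $E=\{z:\epsilon/2<d(z,K)<\epsilon\}$, delete the union $\mathcal{A}$ of the closed balls of radius $\epsilon/(8\ell)$ about the zeros and poles of $h$, and choose a smooth closed curve $\gamma\subset E\setminus\mathcal{A}$ winding once about the face containing $K$, with bounded face $\Omega\subseteq K_\epsilon$. Exactly as in Theorem~\ref{thm: Main inequalities.}, on $\gamma$ we have $|h'/h|<8\ell^2/\epsilon$.

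The whole problem then reduces to the single inequality $|g'/g|>8\ell^2/\epsilon$ on $\gamma$. Granting it, Rouch\'e applied to $g'/g$ and $h'/h$ along $\gamma$, together with $g'/g+h'/h=(gh)'/(gh)$, gives $\#_z(g'/g,\Omega)-\#_p(g'/g,\Omega)=\#_z((gh)'/(gh),\Omega)-\#_p((gh)'/(gh),\Omega)$; applying Lemma~\ref{lem: Lemma.} to both sides, rearranging as in the proof of Theorem~\ref{thm: Main inequalities.}, and using that every zero, pole, and critical point of $g$ already lies in $K\subseteq\Omega$, one gets
\[
\#_c(gh,K_\epsilon)\ge\#_c(gh,\Omega)=\#_c(g,K)+\#_z(h,\Omega)+\#_p(h,\Omega)\ge\#_c(g,K),
\]
which is the assertion. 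So everything rests on bounding $|g'/g|$ from below on a curve at distance $\sim\epsilon$ from $K$, in terms of $m$.

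This is the main obstacle, and the estimate in the proof of Theorem~\ref{thm: Main inequalities.} does not survive. There one wrote $g'(z_0)/g(z_0)=\sum_{x\in Z_g}1/(z_0-x)$ and, after normalizing so $K$ lay in the half-plane $\{\Re z\le -\epsilon/2\}$ inside the disk $|z-z_0|\le s+\epsilon$, bounded each term's real part below by $(\epsilon/2)/(s+\epsilon)^2$, obtaining a bound proportional to $|Z_g|$ that was insensitive to clustering of the zeros. Two features break this now: if $K$ is unbounded the factor $(s+\epsilon)^{-2}$ disappears and a zero of $g$ far out in $K$ contributes arbitrarily little, and the poles of $g$ enter $g'/g$ with the opposite sign, so the real part can cancel. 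In fact the naive fixed-curve scheme provably cannot establish the needed inequality once $g$ has poles, even for bounded $K$: take $g=\left(\dfrac{z-a}{z-b}\right)^{N}$ with $a,b\in K$, $b\ne a$; then $m=2N$, $g$ has its $N-1$ critical points all at $a\in K$, and $g'/g=\dfrac{N(b-a)}{(z-a)(z-b)}$, whose modulus on $\gamma$ is at most $4N|b-a|/\epsilon^{2}$. Letting $b\to a$ makes $|g'/g|$ arbitrarily small on all of $\gamma$ while $m$ stays large, so no inequality $|g'/g|>8\ell^2/\epsilon$ follows from any lower bound on $m/\ell$. (The conjecture itself survives this example --- when $b\approx a$ the extra critical points of $gh$ cluster near $a$ as well --- it is the fixed separating curve that fails to see this.) If $K$ is unbounded there is the further difficulty that the separating curve must itself be unbounded, forcing a comparison of $|g'/g|$ and $|h'/h|$ near infinity, where both decay like $1/|z|$ and the comparison is governed by $|\#_z(g,\mathbb{C})-\#_p(g,\mathbb{C})|$ against $|\#_z(h,\mathbb{C})-\#_p(h,\mathbb{C})|$, quantities not controlled by $m/\ell$.

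A proof therefore has to leave the template. The options I see are: to choose a curve adapted to $g$ that hugs $K$ near the degenerate clusters of its zeros and poles (unclear how, when such a cluster lies deep inside $K$); to replace Rouch\'e by a winding-number estimate on $1+\dfrac{h'/h}{g'/g}$ along $\gamma$ that tolerates $|h'/h|\ge|g'/g|$ on part of $\gamma$; or to invoke the logarithmic-potential machinery behind Theorem~\ref{thm: Asymp. GL.}. I would attempt the winding-number refinement first, since $g'/g$ being zero-free on $\gamma$ does restrict $\arg(g'/g)$ there; but making that restriction quantitative against the $\ell$ perturbing terms of $h'/h$ is exactly the step I do not know how to carry out, which is why the statement is offered only as a conjecture.
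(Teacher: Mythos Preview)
Your analysis is well taken, but note that the paper does not offer a proof of this statement at all: it is explicitly stated as Conjecture~\ref{conj: Allow unbounded and poles.}, with only a heuristic motivation (the paragraph preceding it in Section~\ref{sect: An additional conjecture.}) and no argument. So there is no ``paper's own proof'' to compare against; your conclusion that the statement is offered only as a conjecture is exactly right.

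Your discussion goes well beyond what the paper says. The paper merely observes that the heuristic behind Theorem~\ref{thm: Main inequalities.} ``seems to accommodate'' unbounded $K$ and rational $g$, whereas you actually test the Rouch\'e template and exhibit a concrete obstruction: the example $g=\bigl((z-a)/(z-b)\bigr)^{N}$ with $b\to a$ shows that $|g'/g|$ can be made arbitrarily small on any fixed separating curve while $m=2N$ stays large, so no inequality of the form $|g'/g|>|h'/h|$ on $\gamma$ can follow from a lower bound on $m/\ell$ alone. This is a genuine and correct identification of why the proof of Theorem~\ref{thm: Main inequalities.} does not extend, and it is sharper than anything the paper states. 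Your further remark about the unbounded case---that on an unbounded separating contour both $g'/g$ and $h'/h$ decay like $1/|z|$ with leading coefficients $\#_z(g,\mathbb{C})-\#_p(g,\mathbb{C})$ and $\#_z(h,\mathbb{C})-\#_p(h,\mathbb{C})$, which are not controlled by $m/\ell$---is also a valid obstruction not mentioned in the paper.
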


\bibliographystyle{plain}
\bibliography{APPROXGUASSLUCAS}

\end{document}